\theoremstyle{plain}
\newtheorem{thm}{Theorem}
\newtheorem{cor}[thm]{Corollary}
\newtheorem{pro}[thm]{Proposition}
\newtheorem{lem}[thm]{Lemma}
\theoremstyle{definition}
\newtheorem{ques}[thm]{Question}
\newtheorem{rem}[thm]{Remark}
\newcommand{\mygraph}[1]{\xybox{\xygraph{#1}}}
\def\C{\mathbb{C}}
\def\Bir{{\text{\rm Bir}}}
\def\cpo{{\mathbb{P}^1}}
\def\cpt{{\mathbb{P}^3}}
\def\cpq{{\mathbb{P}^4}}
\renewcommand{\phi}{\varphi}
\begin{document}

\title{On the genus of birational maps between 3-folds}
\author{St\'ephane Lamy}
\address{Institut de Math\'ematiques de Toulouse, Universit\'e Paul Sabatier, 
118 route de Narbonne, 31062 Toulouse Cedex 9, France}
\email{slamy@math.univ-toulouse.fr}
\thanks{I thank J. Blanc and A. Dubouloz who pointed out and helped me fix some silly mistakes in an early version of this work.
This research was supported by ANR Grant "BirPol"  ANR-11-JS01-004-01.}
\keywords{Birational maps, genus, Cremona group, polynomial automorphisms}
\subjclass[2010]{14E07}

\maketitle

In this note we present two equivalent definitions for the genus of a birational map $\phi\colon X \dashrightarrow Y$ between smooth complex projective 3-folds.
The first one is the definition introduced by M. A. Frumkin in \cite{F}, the second one was recently suggested to me by S. Cantat.
By focusing first on proving that these two definitions are equivalent, one can obtain all the results in \cite{F} in a much shorter way.
In particular, the genus of an automorphism of $\C^3$, view as a birational self-map of $\cpt$,  will easily be proved to be 0.

\section{Preliminaries}

By a \textbf{$n$-fold} we always mean a smooth projective variety  of dimension $n$ over $\C$. 

Let  $\phi\colon X \dashrightarrow Y$ be a birational map between $n$-folds.
We assume that a projective embedding of $Y$ is fixed once and for all, hence $\phi$ corresponds to the linear system on $X$ given by preimages by $\phi$ of hyperplane sections on $Y$. 

We call \textbf{base locus} of $\phi$ the base locus of the linear system associated with $\phi$: this is a subvariety of codimension at least 2  of $X$, which corresponds to the indeterminacy set of the map. 

Another subvariety of $X$ associated with $\phi$ is the \textbf{exceptional set}, which is defined as the complement of the maximal open subset where $\phi$ is a local isomorphism.
If $X = Y = \mathbb{P}^n$ the exceptional set (given by the single equation Jacobian $=0$) has pure codimension 1, but this not the case in general: consider for instance the case of a flop, or more generally of any isomorphism in codimension 1, where the exceptional set coincides with the base locus.     

A \textbf{regular resolution} of $\phi$ is a morphism $\sigma\colon Z \to X$ which is a sequence of blow-ups $\sigma = \sigma_1 \circ \dots \circ \sigma_r$ along smooth irreducible centers, such that $Z \to Y$ is a birational morphism, and such that each center $B_i$ of the blow-up $\sigma_i\colon Z_i \to Z_{i-1}$ is contained in the base locus of the induced map $Z_{i-1} \dashrightarrow Y$.
Recall that as a standard consequence of resolution of singularities, a regular resolution always exists. 

$$\mygraph{
!{<0cm,0cm>;<1cm,0cm>:<0cm,1cm>::}
!{(0,0)}*++{Z_0 = X}="X"
!{(1,1)}*+{Z_1}="Z1"
!{(2.5,2.5)}*++{Z_r = Z}="Z"
!{(3.5,0)}*++{Y}="Y"
"X"-@{-->}_\phi"Y" "Z"-@{.>}"Z1"-@{->}^>(.6){\sigma_1}"X"  "Z"-@{->}"Y" 
 "Z"-@{->}@/_5mm/_\sigma"X" 
}$$

We shall use the following basic observations about the exceptional set and the base locus  of a birational map.

\begin{lem} \label{lem:E}
\begin{enumerate}
\item Let $\tau \colon X \to Y$ be a birational morphism between 3-folds. Then through a general point of any component of the exceptional set of $\tau$, there exists a rational curve contracted by $\tau$.
\item Let $\phi \colon X \dashrightarrow Y$ be a birational map between 3-folds, and let $E \subset X$ be an irreducible divisor contracted by $\phi$. 
Then $E$ is birational to $\cpo \times C$ for a smooth curve $C$. 
\end{enumerate}
\end{lem}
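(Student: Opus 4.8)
The plan is to reduce part (2) to part (1) together with the classification of surfaces, and to prove part (1) by reducing to the well-understood case of a birational morphism between smooth projective surfaces.

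For part (1), fix a component $E$ of the exceptional set of $\tau \colon X \to Y$ and a general point $p \in E$, and put $y = \tau(p)$. As $\tau$ is a proper birational morphism and $Y$ is smooth, hence normal, Zariski's Main Theorem shows that $\tau$ is a local isomorphism at every point at which it is quasi-finite; since $\tau$ fails to be a local isomorphism at $p$, the fibre $\tau^{-1}(y)$ is positive-dimensional, so $\dim \tau(E) \le 1$. Now slice $Y$ by a general hyperplane $H$ through $y$: because $Y$ is smooth, a general such $H$ cuts $Y$ in a smooth surface (the hyperplanes through $y$ tangent to $Y$ there form a proper linear subsystem, and Bertini handles the remaining points). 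Let $S \subseteq X$ be the strict transform of $H$, so that $\tau|_S \colon S \to H$ is a proper birational morphism. Depending on whether $\tau$ contracts $E$ to a curve or to a point one checks --- letting $H$ vary if necessary, and using that the traces of the $\widetilde H$ on $E$ sweep out $E$ --- that $S$ may be chosen to contain, through the smooth point $p$, an irreducible curve $\Gamma$ contracted by $\tau|_S$. Resolving the singularities of $S$ and invoking the structure of birational morphisms between smooth projective surfaces --- each is a composition of blow-ups at points, so that its exceptional locus is a union of smooth rational curves --- we find that the strict transform of $\Gamma$, being contracted, is a union of copies of $\cpo$; hence $\Gamma$ is rational near $p$, and its image in $X$ is a rational curve through $p$ contracted by $\tau$.

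For part (2), take a regular resolution $\sigma \colon Z \to X$ of $\phi$, so that $\pi := \phi \circ \sigma \colon Z \to Y$ is a birational morphism. Since $E$ is a divisor and every centre of $\sigma$ has codimension at least $2$, the strict transform $\widetilde E \subseteq Z$ is a prime divisor birational to $E$, and it is contracted by $\pi$; hence $\widetilde E$ is a component of the exceptional set of $\pi$. By part (1) there is, through a general point $q$ of $\widetilde E$, a rational curve contracted by $\pi$, and since $q$ lies on no other component of the exceptional set of $\pi$ this curve is contained in $\widetilde E$. Thus $\widetilde E$ is uniruled, so by the Enriques--Castelnuovo classification it is birationally ruled, i.e. birational to $\cpo \times C$ for some smooth curve $C$; as $E$ is birational to $\widetilde E$, the lemma follows.

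The main obstacle lies entirely in part (1): making the reduction to surfaces rigorous requires ensuring simultaneously that a curve contracted by $\tau$ survives inside the strict transform $S$ of the hyperplane section and that the prescribed general point $p$ is a smooth point of $S$ --- this is why one must run the Bertini-type arguments with $p$ (or the relevant contracted curve through it) fixed beforehand, and why a short case distinction according to the dimension of $\tau(E)$ is needed. An alternative, at the cost of heavier machinery, is simply to quote that every positive-dimensional fibre of a birational morphism from a smooth variety to a normal variety is rationally chain connected.
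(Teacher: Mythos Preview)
The paper does not actually prove this lemma: it cites \cite[Proposition 1.43]{Deb} for part (1), remarks that in Debarre's argument the ruledness of the exceptional divisor (i.e.\ part (2) in the morphism case) is in fact established first, and then reduces part (2) for rational maps to the morphism case by passing to a resolution. Your reduction of (2) to the morphism case via a regular resolution is the same as the paper's, and your deduction of (2) from (1) together with the Enriques classification (uniruled surface $\Rightarrow$ birationally ruled) is correct and clean; note that this reverses the logical order relative to Debarre, who derives (1) from (2) rather than the other way around.

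For (1) your slicing argument works when $\tau(E)$ is a curve: for general $p\in E$ and a general hyperplane $H$ through $y=\tau(p)$, no exceptional divisor of $\tau$ is contained in $\tau^{-1}(H)$, so the strict transform $S$ agrees with $\tau^{-1}(H)$ near $p$ and contains the fibre of $E\to\tau(E)$ through $p$; after resolving $S$, factorisation of birational morphisms of smooth surfaces into point blow-ups makes this fibre a union of $\cpo$'s. When $\tau(E)$ is a point, however, $E$ is itself a component of $\tau^{*}H$ for every $H$ through $y$, and nothing a priori forces the strict transform $S$ to pass through a prescribed general $p\in E$. Your claim that the curves $S\cap E$ sweep out $E$ as $H$ varies is precisely what must be proved, and ``one checks'' hides a genuine argument here---as you yourself concede in your final paragraph. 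The proof in Debarre that the paper quotes sidesteps this by producing $K_X$-negative rational curves directly via Mori-type methods, uniformly in $\dim\tau(E)$ and in $\dim X$; your alternative of invoking rational chain-connectedness of fibres is valid and of the same flavour, but gives up the elementarity you were aiming for.
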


\begin{proof}
For the first assertion (which is in fact true in any dimension), see for instance \cite[Proposition 1.43]{Deb}.
When $\phi$ is a morphism, the second assertion is in fact what is first proved in \cite{Deb}. 
Finally, when $\phi$ is not a morphism, we reduce to the previous case by considering a resolution of $\phi$.
\end{proof}

\begin{lem}\label{lem:basic}
Let $\phi \colon X \dashrightarrow Y$ be a birational map between $n$-folds, and consider 
$$\xymatrix{
& Z \ar[ld]_{\sigma} \ar[dr]^{\tau} \\
X \ar@{-->}[rr]_{\phi} && Y
}$$
a regular resolution of $\phi$.
Then a point $p \in X$ is in the base locus of $\phi$ if and only if the set $\tau(\sigma^{-1}(p))$ has dimension at least 1.
\end{lem}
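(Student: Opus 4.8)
The plan is to express $\tau(\sigma^{-1}(p))$ in a way that does not refer to the chosen resolution, by passing to the graph of $\phi$. Write $\Gamma\subseteq X\times Y$ for the closure of $\{(x,\phi(x)):x\in\mathrm{dom}(\phi)\}$, where $\mathrm{dom}(\phi)$ is the largest open subset of $X$ on which $\phi$ is a morphism, and let $\mathrm{pr}_1\colon\Gamma\to X$ and $\mathrm{pr}_2\colon\Gamma\to Y$ be the two projections. Then $\mathrm{pr}_1$ is a projective morphism, and it is birational because $x\mapsto(x,\phi(x))$ gives a rational inverse. The morphism $(\sigma,\tau)\colon Z\to X\times Y$ factors as $Z\xrightarrow{g}\Gamma\hookrightarrow X\times Y$ with $\sigma=\mathrm{pr}_1\circ g$ and $\tau=\mathrm{pr}_2\circ g$; here $g$ is proper (as $Z$ is projective) and birational, hence surjective. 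Therefore $g(\sigma^{-1}(p))=\mathrm{pr}_1^{-1}(p)$, so $\tau(\sigma^{-1}(p))=\mathrm{pr}_2(\mathrm{pr}_1^{-1}(p))$ for every $p\in X$. Since $\mathrm{pr}_1^{-1}(p)$ lies in $\{p\}\times Y$ and $\mathrm{pr}_2$ restricts to an isomorphism from $\{p\}\times Y$ onto $Y$, the map $\mathrm{pr}_2$ is injective on $\mathrm{pr}_1^{-1}(p)$, whence $\dim\tau(\sigma^{-1}(p))=\dim\mathrm{pr}_1^{-1}(p)$. So the lemma reduces to: $p$ lies in the base locus of $\phi$ if and only if $\dim\mathrm{pr}_1^{-1}(p)\ge 1$.

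Next I would identify the base locus of $\phi$ with the non-isomorphism locus of $\mathrm{pr}_1$. By the preliminaries the base locus of $\phi$ is its indeterminacy set, that is $X\setminus\mathrm{dom}(\phi)$. If $\mathrm{pr}_1$ is an isomorphism over an open set $U$, then $\phi|_U=\mathrm{pr}_2\circ(\mathrm{pr}_1|_{\mathrm{pr}_1^{-1}(U)})^{-1}$ is a morphism, so $U\subseteq\mathrm{dom}(\phi)$. Conversely, if $U\subseteq\mathrm{dom}(\phi)$, then the graph of the morphism $\phi|_U$ is a closed subset of $\mathrm{pr}_1^{-1}(U)$, irreducible of dimension $\dim X=\dim\Gamma$, hence equal to $\mathrm{pr}_1^{-1}(U)$, and it is carried isomorphically onto $U$ by $\mathrm{pr}_1$. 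Hence the non-isomorphism locus of $\mathrm{pr}_1$ is exactly $X\setminus\mathrm{dom}(\phi)=\Bs\phi$.

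It then remains to check that, for the birational projective morphism $\mathrm{pr}_1\colon\Gamma\to X$ with $X$ smooth, the non-isomorphism locus equals $\{p\in X:\dim\mathrm{pr}_1^{-1}(p)\ge 1\}$. If $\dim\mathrm{pr}_1^{-1}(p)\ge1$ then $\mathrm{pr}_1$ is clearly not an isomorphism near $p$. If $\dim\mathrm{pr}_1^{-1}(p)=0$, then by upper semicontinuity of fibre dimension together with properness of $\mathrm{pr}_1$ the set $V=\{p:\dim\mathrm{pr}_1^{-1}(p)=0\}$ is an open neighbourhood of $p$; over $V$ the morphism $\mathrm{pr}_1$ is proper with finite fibres, hence finite, and birational onto the smooth variety $V$, so by Zariski's main theorem it is an isomorphism over $V$. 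Combining the three steps yields $p\in\Bs\phi\iff\dim\mathrm{pr}_1^{-1}(p)\ge1\iff\dim\tau(\sigma^{-1}(p))\ge1$.

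The only non-formal ingredient is Zariski's main theorem, used once to extend $\phi$ over the locus where $\mathrm{pr}_1$ has zero-dimensional fibres; the step most prone to slips is the bookkeeping in the first paragraph, namely verifying that passing from $Z$ to the graph $\Gamma$ leaves $\tau(\sigma^{-1}(p))=\mathrm{pr}_2(\mathrm{pr}_1^{-1}(p))$ unchanged and that this set has the same dimension as the fibre $\mathrm{pr}_1^{-1}(p)$. I note that this argument does not use that the resolution is regular: regularity would be the tool for an approach working directly on $Z$ (to know that $\sigma$ is an isomorphism exactly over the complement of $\Bs\phi$), but it is not needed here.
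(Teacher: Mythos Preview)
Your argument is correct, and it is genuinely different from the paper's. The paper proves the two implications separately and quite concretely: for the easy direction it uses the regularity of the resolution to conclude that $\sigma^{-1}(p)$ is a single point when $p\notin\Bs\phi$; for the converse it uses the linear-system description of the base locus, observing that a general hyperplane section $H_Y$ of $Y$ has strict transform $H_X$ passing through $p$, so $\tau(\sigma^{-1}(p))$ meets every general $H_Y$ and hence has positive dimension. Your approach instead passes to the graph $\Gamma\subset X\times Y$, identifies $\tau(\sigma^{-1}(p))$ with the fibre $\mathrm{pr}_1^{-1}(p)$ (up to the isomorphism $\{p\}\times Y\cong Y$), and then invokes Zariski's main theorem to equate the positive-fibre locus with the indeterminacy locus. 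The trade-off is clear: the paper's proof is more elementary and stays close to the definitions at hand, while yours imports a standard tool but yields a slightly stronger statement, since as you note it never uses that $\sigma$ is a \emph{regular} resolution---any resolution $Z$ dominating the graph would do.
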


\begin{proof}
If $p$ is not in the base locus of $\phi$ then by regularity of the resolution $\sigma^{-1}(p)$ is a single point, and thus $\tau(\sigma^{-1}(p))$ as well.

Now suppose that $p$ is in the base locus of $\phi$, and consider $H_Y$ a general hyperplane section of $Y$. 
Denote by $H_X$, $H_Z$ the strict transform of $H_Y$ on $X$ and $Z$ respectively.
By definition of the base locus, we have $p \in H_X$, hence 
$$\sigma^{-1}(p) \cap H_Z \neq \emptyset \quad\text{ and }\quad \tau(\sigma^{-1}(p)) \cap H_Y \neq \emptyset.$$ 
This implies that $\tau(\sigma^{-1}(p))$ has positive dimension.
\end{proof}

We will consider blow-ups of smooth irreducible centers in 3-folds.
If $B$ is such a center, $B$ is either a point or a smooth curve.
We define the \textbf{genus} $g(B)$ to be 0 if $B$ is a point, and the usual genus if $B$ is a curve.
Similarly, if $E$ is an irreducible divisor contracted by a birational map between 3-folds, then by Lemma \ref{lem:E} $E$ is birational to a product $\cpo \times C$ where $C$ is a smooth curve, and we define the \textbf{genus} $g(E)$ of the contracted divisor to be the genus of $C$.

\section{The two definitions}

Consider now a birational map $\phi\colon X \dashrightarrow Y$ between 3-folds, and let $\sigma\colon Z \to Y$ be a regular resolution of $\phi^{-1}$.

Frumkin \cite{F} defines the genus $g(\phi)$ of $\phi$ to be the maximum of the genus among the centers of the blow-ups in the resolution $\sigma$. Remark that this definition depends a priori from a choice of regular resolution, and Frumkin spends a few pages in order to show that in fact it does not.

During the social dinner of the conference \textit{Groups of Automorphisms in Birational and Affine Geometry},
S. Cantat suggested to me another definition, which is certainly easier to handle in practice: define the genus of $\phi$ to be the maximum of the genus among the irreducible divisors in $X$ contracted by $\phi$.

Denote by $F_1,\dots,F_r$ the exceptional divisors of the sequence of blow-ups $\sigma = \sigma_1 \circ \dots \circ \sigma_r$, or more precisely their strict transforms on $Z$.
On the other hand, denote by $E_1, \dots, E_s$ the strict transforms on $Z$ of the irreducible divisors contracted by $\phi$.

Note that if $\phi^{-1}$ is a morphism, then both collections $\{F_i\}$ and  $\{E_i\}$ are empty. 
In this case, by convention we say that the genus of $\phi$ is 0.
In this section we prove:

\begin{pro} \label{pro:samedef}
Assume $\phi^{-1}$ is not a morphism. Then 
$$\max_{i = 1,\dots,s} g(E_i) = \max_{i = 1,\dots,r} g(F_i).$$
In other words the definition of the genus by Frumkin coincides with the one suggested by Cantat, and in particular it does not depend on a choice of a regular resolution. 
\end{pro}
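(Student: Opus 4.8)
The plan is to do everything on the given regular resolution. Write $\pi\colon Z\to X$ for the birational morphism induced by $\sigma$ and $\phi$, write $\delta_i\colon Z\to Z_i$ for the partial blow-downs, and let $\mu_i\colon Z_i\dashrightarrow X$ be the induced rational maps, so that by definition of a regular resolution each center $B_{i}$ lies in $\Bs(\mu_{i-1})$. The first step is the easy inequality together with a structural description of the $E_i$. Since $\sigma$ is a composition of blow-ups along smooth centers, $\Exc(\sigma)$ is purely of codimension one and equals $F_1\cup\dots\cup F_r$, so the $F_j$ are exactly the $\sigma$-exceptional prime divisors of $Z$. If $D\subset X$ is a prime divisor contracted by $\phi$, its strict transform $E\subset Z$ satisfies $\sigma(E)=\overline{\phi(D)}$, of dimension $\le 1$, hence $E$ is $\sigma$-exceptional, i.e. $E=F_j$ for some $j$; conversely an $F_j$ which is not $\pi$-exceptional is the strict transform of the prime divisor $\pi(F_j)\subset X$, which is then contracted by $\phi$ (because $\sigma(F_j)$ has dimension $\le 1$). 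Thus $\{E_1,\dots,E_s\}$ is precisely the set of $F_j$ that are not $\pi$-exceptional, and since the genus is a birational invariant of these ruled surfaces, $\max_i g(E_i)\le\max_j g(F_j)$.

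For the reverse inequality I would fix $j_0$ with $g(F_{j_0})=g^{*}:=\max_j g(F_j)$, set $B:=B_{j_0}$, and note that $F_{j_0}$ is birational to $\cpo\times B$ and $g^{*}=g(B)$. If $F_{j_0}$ is not $\pi$-exceptional we are done by the previous paragraph, so assume it is; we may also assume $g^{*}\ge 1$, so that $B$ is a curve (if $B$ were a point, $F_{j_0}$ would be birational to $\cpd$ and $g^{*}=0$). I will use two facts: a ruled surface over a curve of genus $\ge 1$ has a unique ruling and contains no rational curve other than fibres of it; and Lemma~\ref{lem:E}(1), which gives that a codimension-one component of the exceptional locus of a birational morphism of $3$-folds is covered by contracted rational curves lying on it. Applying the latter to $\pi$ and $F_{j_0}$ and combining with the former, $\pi$ contracts the general fibre $\Phi_p$ of the ruling $F_{j_0}\to B$ to a point. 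Now apply Lemma~\ref{lem:basic} to $\mu_{j_0-1}$, which is resolved regularly by $\delta_{j_0-1}\colon Z\to Z_{j_0-1}$ and $\pi$ (each center $B_i$ with $i\ge j_0$ still lies in $\Bs(\mu_{i-1})$): since $B\subset\Bs(\mu_{j_0-1})$, for every $p\in B$ the set $\pi(\delta_{j_0-1}^{-1}(p))$ has positive dimension. As $\pi$ is constant on $\Phi_p=F_{j_0}\cap\delta_{j_0-1}^{-1}(p)$, the fibre $\delta_{j_0-1}^{-1}(p)$ must contain an irreducible curve $C_p$, not contained in $F_{j_0}$, with $\pi(C_p)$ a curve. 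Letting $p$ vary over $B$, the $C_p$ sweep out a component $P$ of $\delta_{j_0-1}^{-1}(B)$ with $P\neq F_{j_0}$ and $\delta_{j_0-1}(P)=B$; being a component of $\delta_{j_0-1}^{-1}(B)$, $P$ is one of the $F_j$.

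It remains to check that $P$ is not $\pi$-exceptional: granting this, $\pi(P)\subset X$ is a prime divisor, contracted by $\phi$ because its strict transform $P$ is $\sigma$-exceptional, and $g(\pi(P))=g(P)$; and $P$ is exceptional for the birational morphism $\delta_{j_0-1}$ with image the curve $B$, so by Lemma~\ref{lem:E}(1) the morphism $P\to B$ has rational general fibre, which with $g(B)\ge 1$ identifies it with the unique ruling of $P$, whence $g(P)=g(B)=g^{*}$; this gives $\max_i g(E_i)\ge g^{*}$ and proves the Proposition. To see $P$ is not $\pi$-exceptional, note that each $C_p$ is a fibre of the ruling $P\to B$ just described; if $P$ were $\pi$-exceptional, Lemma~\ref{lem:E}(1) applied to $\pi$ (together with the uniqueness of the ruling of $P$, valid since $g(B)\ge 1$) would force $\pi|_P$ to contract the fibres of that very ruling, so $\pi(C_p)$ would be a point, contradicting the choice of $C_p$.

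The real obstacle is the situation of the last two paragraphs, where $F_{j_0}$ is exceptional over \emph{both} $X$ and $Y$: here no divisor contracted by $\phi$ is directly visible and one must extract one from the combinatorics of the resolution. What makes this possible is the regularity of the resolution together with Lemma~\ref{lem:basic} — this is exactly what forces the auxiliary curves $C_p$ to exist, because $B$ lies in the base locus of $\mu_{j_0-1}$ — and the rigidity of the ruling of a ruled surface over a base of genus $\ge 1$, which is what prevents the newly produced divisor $P$ from again being exceptional over $X$ (so that no regress occurs). The remaining points — keeping track of Stein factorizations when the general fibres above are mildly reducible, and the degenerate cases $g^{*}=0$, $s=0$, or point centers — are routine.
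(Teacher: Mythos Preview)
Your argument is correct, and the overall shape is close to the paper's, but the organization is genuinely different. Both proofs share the easy inclusion $\{E_i\}\subset\{F_j\}$, and both ultimately rely on Lemma~\ref{lem:basic} together with the fact that a ruled surface over a curve of genus $\ge 1$ has a unique ruling. The difference lies in how the hard direction is handled. The paper introduces a partial order on the $F_i$ and calls the maximal elements \emph{essential}; a Riemann--Hurwitz argument (Lemma~\ref{lem:ess}) shows that $\max_i g(F_i)$ is realized by an essential $F_i$, and the payoff of essentiality is that for general $p\in B_i$ the whole fibre $\tilde\sigma^{-1}(p)$ is just the ruling line $l_p\subset F_i$, so Lemma~\ref{lem:basic} forces $\tau(l_p)$ to move and hence $\tau(F_i)$ to be a divisor --- i.e.\ the essential $F_i$ itself is one of the $E_j$. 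You instead keep an arbitrary $F_{j_0}$ realizing the maximum and, when it happens to be $\pi$-exceptional, use Lemma~\ref{lem:basic} plus a pigeonhole over the finitely many components of $\delta_{j_0-1}^{-1}(B)$ to locate a \emph{different} divisor $P$ (carrying the curves $C_p$ with $\pi(C_p)$ positive-dimensional) and show that $P$ has genus $g^*$ and cannot be $\pi$-exceptional. The paper's device of passing to an essential divisor pre-empties the fibre and so avoids the component-chasing and Stein-factorization bookkeeping you flag at the end; your route avoids introducing the partial order and is more hands-on, at the price of that bookkeeping. One small point: where you invoke Lemma~\ref{lem:E}(1) to get $\pi$-contracted rational curves \emph{on} $F_{j_0}$ (and later on $P$), what you really want is Lemma~\ref{lem:E}(2) applied to the morphism $\pi$, which gives the ruling by $\pi$-contracted rational curves directly; Lemma~\ref{lem:E}(1) alone does not say the rational curve lies on the chosen component.
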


Denote by $B_i$ the center of the blow-up $\sigma_i$ producing $F_i$.
We define a \textbf{ partial order} on the divisors $F_i$ by saying that $F_j \succcurlyeq F_k$ if one of the following conditions is verified:
\begin{enumerate}[$(i)$]
\item $j = k$;
\item $j > k$, $B_k$ is a point, and $B_j$ is contained in the strict transform of $F_k$;
\item $j > k$, $B_k$ is a curve,  and $B_j$ intersects the general fiber of the strict transform of the ruled surface $F_k$.
\end{enumerate}
We say that $F_i$ is \textbf{essential} if $F_i$ is  a maximal element for the order $\succcurlyeq$.

\begin{lem} \label{lem:ess}
The maximum $\max_i g(F_i)$ is realized by an essential divisor.
\end{lem}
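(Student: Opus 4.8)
The plan is to show that the genus function $i \mapsto g(F_i)$ is non-decreasing along the order $\succcurlyeq$, i.e. that $F_j \succcurlyeq F_k$ implies $g(F_j) \ge g(F_k)$; granting this, the lemma follows immediately. Indeed, choose $F_k$ with $g(F_k) = \max_i g(F_i)$; if $F_k$ is not essential, climb the order to some $F_j \succ F_k$, which by monotonicity still has maximal genus. Since $\{F_1,\dots,F_r\}$ is finite, the order $\succcurlyeq$ admits no infinite strictly ascending chain, so iterating this step terminates at an essential divisor of maximal genus.

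Because $\succcurlyeq$ is the transitive closure of the covering relations $(i)$--$(iii)$, it is enough to check monotonicity for a single covering step, and there are two cases. If $B_k$ is a point (case $(ii)$), then $F_k$ is birational to $\cpd$, hence to $\cpo\times\cpo$, so $g(F_k)=0$ and there is nothing to prove. If $B_k$ is a curve (case $(iii)$), then $F_k$, and hence its strict transform on $Z_{j-1}$, is birational to a $\cpo$-bundle over $B_k$ (by Lemma~\ref{lem:E}), so that $g(F_k)=g(B_k)$. The defining condition is that $B_j$ meets a general fiber of the ruling of this strict transform. A general fiber varies in a positive-dimensional family of curves sweeping out a dense subset of the surface, and distinct fibers are disjoint; hence $B_j$ is neither a point, nor disjoint from the surface, nor contained in a single fiber. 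Therefore $B_j$ is a smooth curve contained in the strict transform of $F_k$ on which the ruling restricts to a non-constant, hence surjective, morphism $B_j \to B_k$ of smooth projective curves. Riemann--Hurwitz then gives $g(B_j)\ge g(B_k)$, and since $F_j$ is the exceptional divisor of the blow-up of the smooth curve $B_j$ we have $g(F_j)=g(B_j)\ge g(B_k)=g(F_k)$.

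The delicate point is case $(iii)$: one must keep track of how the $\cpo$-bundle structure $F_k\to B_k$ degenerates under the intermediate blow-ups $\sigma_{k+1},\dots,\sigma_{j-1}$ so that "general fiber'' and the induced map $B_j\to B_k$ are unambiguous, and extract from the purely incidence-theoretic condition $(iii)$ the geometric fact that $B_j$ genuinely dominates $B_k$. Once this is secured, the remainder is the elementary well-foundedness argument above.
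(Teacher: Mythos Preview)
Your proof is correct and follows essentially the same approach as the paper: establish that genus is monotone along $\succcurlyeq$ via Riemann--Hurwitz (case $(iii)$ giving a dominant morphism $B_j \to B_k$), then climb from a genus-maximizing $F_k$ to an essential element. The paper's version is terser---it disposes of the rational case by assuming $\max_i g(F_i)>0$ at the outset and does not spell out the well-foundedness step---but the substance is identical; your more explicit treatment of why condition $(iii)$ forces $B_j$ to lie in the strict transform and dominate $B_k$ is a welcome clarification of a point the paper leaves implicit.
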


\begin{proof}
We can assume that the maximum is not $0$, otherwise there is nothing to prove.
Consider $F_k$ realizing the maximum, and $F_j \succcurlyeq F_k$ with $j > k$. 
Then the centers $B_j, B_k$ of $\sigma_j$ and $\sigma_k$ are curves, and $B_j$ dominates $B_k$ by a morphism.
By the Riemann-Hurwitz formula, we get $g(F_j) \ge g(F_k)$, and the claim follows.
\end{proof}

\begin{lem} \label{lem:essinEi}
The subset of the essential divisors $F_i$ with $g(F_i) \ge 1$ is contained in the set of the contracted divisors $E_j$.
\end{lem}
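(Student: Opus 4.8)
Write $\tau\colon Z\to X$ for the birational morphism furnished by the regular resolution $\sigma$ of $\phi^{-1}$, so that $\sigma=\phi\circ\tau$. The plan is to reduce the statement to the claim that if $F_i$ is essential and $g(F_i)\ge 1$, then $\tau$ does not contract $F_i$. This is enough, because the divisors $E_1,\dots,E_s$ are exactly the exceptional divisors $F_k$ of $\sigma$ that $\tau$ does \emph{not} contract. Indeed, each $E_j$ is contracted by $\sigma=\phi\circ\tau$, hence is a prime divisor of $Z$ contracted by $\sigma$; and the prime divisors of $Z$ contracted by $\sigma$ are precisely $F_1,\dots,F_r$ (the divisorial part of the exceptional set of $\sigma$), so $E_j$ is one of the $F_k$. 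Conversely, if $\tau$ does not contract $F_k$, then $D:=\tau(F_k)$ is a prime divisor of $X$ with $\phi(D)=\sigma(F_k)$ of dimension at most $1$, so $D$ is contracted by $\phi$ and $F_k$ is its strict transform, i.e.\ $F_k$ is one of the $E_j$. So assume that $F_i$ is essential with $g(F_i)\ge 1$ --- then its center $B_i$ is a curve of genus $\ge 1$ and $F_i$ is birational to $\cpo\times B_i$ --- and suppose for contradiction that $\tau$ contracts $F_i$.

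First I would produce a rational curve. Since $\tau$ contracts the prime divisor $F_i$, this divisor is an irreducible component of the exceptional set of $\tau$, so by Lemma~\ref{lem:E}(1) a general point $p$ of $F_i$ lies on a rational curve $C$ contracted by $\tau$. As $C$ is irreducible, contained in the exceptional set of $\tau$, and passes through the general point $p$, it must lie in $F_i$; and since $B_i$ has positive genus while $C$ is rational, $C$ is contained in a fiber of the ruling $F_i\dashrightarrow B_i$, hence, being an irreducible curve through the general point $p$, is the general fiber of $F_i$ through $p$. This fiber is moreover contracted by $\sigma$, since a fiber of the $\cpo$-bundle $\sigma_i^{-1}(B_i)\to B_i$ is sent by $\sigma$ to a point of the image of $B_i$ in $Y$. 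In particular, for $b$ in a dense open subset of $B_i$ the strict transform on $Z$ of the fiber $\sigma_i^{-1}(b)$ is contracted by $\tau$.

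Next I would argue by a dichotomy. If some later center $B_j$, $j>i$, meets the general fiber of the strict transform of $F_i$ on $Z_{j-1}$, then $B_j$ is contained in that strict transform and dominates $B_i$ (a curve meeting the general ruling fiber is a multisection of the ruling), so $F_j\succcurlyeq F_i$ by condition $(iii)$ in the definition of the order, contradicting the essentiality of $F_i$. Therefore no later center meets the general fiber of $F_i$, and so, choosing $b\in B_i$ general, the fiber $\ell_b:=\sigma_i^{-1}(b)\cong\cpo$ is disjoint from all of $B_{i+1},\dots,B_r$ (each $B_j$ can meet the fibers of the strict transform of $F_i$ on $Z_{j-1}$ only over a finite subset of $B_i$). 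Consequently $(\sigma_i\circ\cdots\circ\sigma_r)^{-1}(b)$ equals the strict transform $\ell$ of $\ell_b$ on $Z$, which is a $\cpo$ and, for $b$ general, is one of the fibers of $F_i$ contracted by $\tau$; thus $\tau\big((\sigma_i\circ\cdots\circ\sigma_r)^{-1}(b)\big)$ is a single point. But $\big(\sigma_i\circ\cdots\circ\sigma_r\colon Z\to Z_{i-1},\ \tau\colon Z\to X\big)$ is a regular resolution of the induced map $Z_{i-1}\dashrightarrow X$, so Lemma~\ref{lem:basic} forces $b$ to lie outside the base locus of $Z_{i-1}\dashrightarrow X$ --- contradicting the fact that $B_i$, hence $b$, lies in that base locus, which is part of the definition of the regular resolution $\sigma$. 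This contradiction proves the claim, and with it the lemma.

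The crux is the interplay between essentiality and the combinatorics of the tower: condition $(iii)$ is precisely what guarantees, for an essential $F_i$, that no later center touches the general ruling fiber, so that this fiber survives untouched from $Z_i$ all the way down to $Z$ and its total transform under $\sigma_i\circ\cdots\circ\sigma_r$ is a genuine $\cpo$; contracting such a $\cpo$ by $\tau$ then contradicts, via Lemma~\ref{lem:basic}, the fact that $B_i$ lies in the base locus of $Z_{i-1}\dashrightarrow X$. The remaining points --- choosing $b$ off the relevant finite subsets of $B_i$, and identifying the rational curve of Lemma~\ref{lem:E}(1) with a ruling fiber by using $g(B_i)\ge 1$ --- are routine.
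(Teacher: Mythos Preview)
Your proof is correct and uses the same ingredients as the paper's --- Lemma~\ref{lem:basic} applied to $\psi_{i-1}\colon Z_{i-1}\dashrightarrow X$, essentiality to ensure $\tilde\sigma^{-1}(b)$ is a single $\cpo$ for general $b\in B_i$, and Lemma~\ref{lem:E} together with $g(B_i)\ge 1$ to control rational curves in $F_i$ --- only arranged in the contrapositive direction: the paper starts from Lemma~\ref{lem:basic} to get $\dim\tau(\tilde\sigma^{-1}(p))\ge 1$ and then rules out the case where $\tau(l_p)$ is a fixed curve via Lemma~\ref{lem:E}, whereas you assume $\tau$ contracts $F_i$, use Lemma~\ref{lem:E} to identify the contracted rational curve with a ruling fiber, and then contradict Lemma~\ref{lem:basic}. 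The two arguments are essentially the same.
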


\begin{proof}
Let $B_i \subset Z_{i-1}$ be the center of a blow-up producing a non-rational essential divisor $F_i$, and consider the diagram:
$$\xymatrix{
& Z \ar[ld]_{\tau} \ar[dr]^{\tilde\sigma = \sigma_i \circ \dots \circ  \sigma_r} \\
X  && Z_{i-1}\ar@{-->}[ll]_{\psi_{i-1}}
}$$
By applying Lemma \ref{lem:basic} to $\psi_{i-1}\colon Z_{i-1} \dashrightarrow X$, we get $\dim \tau(\tilde\sigma^{-1}(p)) \ge 1$ for any point $p \in B_i$.
Since $F_i$ is essential, $l_p := \tilde\sigma^{-1}(p)$ is a smooth rational curve contained in $F_i$ for all except finitely many $p \in B_i$. 
So $\tau(l_p)$ is also a curve. 
If $\tau(l_p)$ varies with $p$, then $\tau(F_i)$ is a divisor, which is one of the $E_i$. 
Now suppose $\tau(l_p)$ is a curve independent of $p$, that means that $F_i$ is contracted to this curve by $\tau$.
Consider $q$ a general point of $F_i$. 
By Lemma \ref{lem:E} there is a rational curve $C \subset F_i$ passing through $q$ and contracted by $\tau$, but this curve should dominate the curve $B_i$ of genus $\ge 1$: contradiction. 
\end{proof}

\begin{proof}[Proof of Proposition \ref{pro:samedef}]
Observe that the strict transform of a divisor contracted by $\phi$ must be contracted by $\sigma$, hence we have the inclusion $\{E_i\} \subset \{F_i\}$. This implies $\max_{i} g(E_i) \le \max_i g(F_i)$.
If all $F_i$ are rational, then the equality is obvious.

Suppose at least one of the $F_i$ is non-rational. 
By Lemma \ref{lem:essinEi} we have the inclusions
$$ \{F_i; F_i \text{ is non-rational and essential}\} \subset \{E_i\} \subset \{F_i\}.$$
Taking maximums, this yields the inequalities
$$\max_i \{g(F_i); F_i \text{ is non-rational and essential}\} \le \max_{i} g(E_i) \le \max_i g(F_i).$$ By Lemma \ref{lem:ess} we conclude that these three maximums are equal.
\end{proof}

\section{Some consequences}

The initial motivation for a reworking of the paper of Frumkin was to get a simple proof of the fact that a birational self-map of $\cpt$ coming from an automorphisms of $\C^3$ admits a resolution by blowing-up points and rational curves:
  
\begin{cor} \label{cor:C3&psauto}
The genus of $\phi$ is zero in the following two situations:
\begin{enumerate}
\item  $\phi \in \Bir(\cpt)$ is the completion of an automorphism of $\C^3$;
\item $\phi\colon X \dashrightarrow Y$ is a pseudo-isomorphism (i.e. an isomorphism in codimension 1).
\end{enumerate}
In particular for such a map $\phi$ any regular resolution only involves  blow-ups of points and of smooth rational curves. 
\end{cor}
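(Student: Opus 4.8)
The strategy is to use the Cantat-style definition of the genus (now legitimate, thanks to Proposition \ref{pro:samedef}): it suffices to check that every irreducible divisor contracted by $\phi$ has genus $0$, i.e.\ is birational to $\cpo \times \cpo$ rather than to $\cpo \times C$ with $C$ of positive genus. By Lemma \ref{lem:E}(2) each contracted divisor $E$ is ruled over a smooth curve $C$, and $g(E) = g(C)$, so the whole point is to rule out a contracted divisor ruled over a curve of genus $\ge 1$.

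For case (2), $\phi$ is a pseudo-isomorphism, hence by definition there are \emph{no} divisors contracted by $\phi$ at all; the set $\{E_i\}$ is empty, and by the convention fixed just before Proposition \ref{pro:samedef} the genus is $0$. (One should be slightly careful that a pseudo-isomorphism need not have $\phi^{-1}$ a morphism, so strictly one invokes the Cantat definition directly: an empty set of contracted divisors gives genus $0$.)

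For case (1), let $\phi \in \Bir(\cpt)$ complete an automorphism $f$ of $\C^3$, and suppose $E \subset \cpt$ is an irreducible divisor contracted by $\phi$ with $g(E) \ge 1$. Since $f$ is an isomorphism of $\C^3$, nothing in the interior $\C^3$ is contracted, so $E$ must lie in the hyperplane at infinity $\Pi \cong \cpd$; as $E$ is an irreducible divisor in $\cpt$ contained in the irreducible surface $\Pi$, we get $E = \Pi$. But $\Pi \cong \cpd$ is a rational surface, so any ruling exhibits it as birational to $\cpo \times \cpo$, whence $g(E) = 0$, a contradiction. Therefore no contracted divisor has positive genus and $g(\phi) = 0$.

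The final sentence is then immediate: if $g(\phi) = 0$ then in any regular resolution $\sigma = \sigma_1 \circ \dots \circ \sigma_r$ of $\phi$ (or of $\phi^{-1}$, as in the definition), every blow-up center $B_i$ satisfies $g(B_i) = 0$, which for a smooth irreducible center in a $3$-fold means $B_i$ is either a point or a smooth rational curve. The only real subtlety worth a remark is the reduction in case (1): one must note that a divisor contracted by $\phi$ cannot meet $\C^3$, which uses precisely that $f \colon \C^3 \to \C^3$ is an isomorphism (a local isomorphism everywhere on $\C^3$), so the exceptional set of $\phi$ — and \emph{a fortiori} any contracted divisor — is contained in $\cpt \setminus \C^3 = \Pi$. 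This is the one place where hypothesis (1) is genuinely used, and it is what makes the argument work; everything else is a formal consequence of Proposition \ref{pro:samedef} and Lemma \ref{lem:E}.
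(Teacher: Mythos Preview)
Your proof is correct and follows exactly the paper's approach: the paper's own proof is the single line ``Both results are obvious using the definition via contracted divisors!'', and you have simply spelled out the details behind that observation in each case. Your explicit identification of the only possible contracted divisor in case~(1) as the hyperplane at infinity $\Pi \cong \cpd$, and your care with the empty-set convention in case~(2), are precisely the points one must check.
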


\begin{proof}
Both results are obvious using the definition via contracted divisors!
\end{proof}

I mention the following result for the sake of completeness, even if I essentially follow the proof of Frumkin (with some slight simplifications).

\begin{pro}[compare with {\cite[Proposition 2.2]{F}}] \label{2.2}
Let $\phi\colon X \dashrightarrow Y$ be a birational map between 3-folds, and let $\sigma\colon Z \to X$, $\sigma'\colon Z' \to Y$ be resolutions of $\phi$, $\phi^{-1}$ respectively.
Assume that $\sigma$ is a regular resolution, and denote by $h\colon Z \dashrightarrow Z'$ the induced birational map.
Then $g(h) = 0$. 
$$\xymatrix{
& Z \ar@{-->}[rr]^h \ar[ld]_\sigma \ar[drrr] && Z' \ar[llld]^{\tau'} \ar[dr]^{\sigma'} \\
X \ar@{-->}[rrrr]_\phi &&&& Y
}$$ 
\end{pro}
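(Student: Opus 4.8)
The plan is to use the characterization of genus via contracted divisors (Proposition \ref{pro:samedef}), which reduces the claim to showing that every irreducible divisor of $Z$ contracted by $h$ is rational. First I would set up the comparison between the two given resolutions. Since $\sigma\colon Z\to X$ is a \emph{regular} resolution of $\phi$, the induced map $\tau'=\sigma'\circ h^{-1}\cdots$ — more precisely, the composition $Z\to Y$ obtained from $h$ followed by $\sigma'$ — is a birational morphism, call it $\tau$; this is exactly the hypothesis that $Z$ dominates $Y$ in a regular resolution. So $h$ fits into the diagram with $Z\xrightarrow{\sigma}X$, $Z\xrightarrow{\tau}Y$ both morphisms and $Z'\xrightarrow{\sigma'}Y$ a morphism as well, and $h\colon Z\dashrightarrow Z'$ lifts $\tau$ over $Y$.

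Now let $D\subset Z$ be an irreducible divisor contracted by $h$. I claim $D$ is contracted by $\tau$ too: indeed $\tau=\sigma'\circ h$ as rational maps, and if $h$ contracts $D$ then its image $h(D)$ has codimension $\ge 2$ in $Z'$, so $\tau(D)=\sigma'(h(D))$ has codimension $\ge 2$ in $Y$ (a morphism cannot raise dimension), hence $D$ is $\tau$-exceptional. By Lemma \ref{lem:E}(1), through a general point of $D$ there is a rational curve contracted by $\tau$. The key point is then to promote this to rationality of $D$ itself: a general such contracted rational curve $C$ satisfies $\sigma(C)$ is a point or a curve, but in any case $D$ is ruled over $\tau(D)$ — and here is where I would invoke Lemma \ref{lem:E}(2) applied to the birational map $h$ (or rather to a resolution of it), concluding $D$ is birational to $\cpo\times C_0$ for a smooth curve $C_0$, with $g(D)=g(C_0)$.

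It remains to show $g(C_0)=0$, i.e. that $D$, as a $\tau$-exceptional divisor of the \emph{morphism} $\tau\colon Z\to Y$ arising from a regular resolution, is rational. This is the main obstacle, and it is exactly the content one extracts from Frumkin's argument. The idea is to run the comparison the other way: since $\sigma$ is obtained as a sequence of blow-ups $\sigma_1\circ\cdots\circ\sigma_r$ along smooth centers contained in successive base loci, one follows $D$ back through this tower. Either $D$ is the strict transform of one of the exceptional divisors $F_i$ of $\sigma$, or $D$ maps onto a divisor downstairs. In the first case $D=F_i$ is a projective bundle (possibly blown up) over a center $B_i$, but $B_i$ is contained in the base locus of the relevant induced map to $Y$; applying Lemma \ref{lem:basic} and the essentiality analysis of Lemmas \ref{lem:ess}--\ref{lem:essinEi} — transposed to this setting — forces, via a rational curve through a general point contracted by $\tau$ as in the proof of Lemma \ref{lem:essinEi}, that $B_i$ be covered by a curve contracted by $\tau$; chasing this shows $D$ cannot have been produced by blowing up a center of positive genus without contradicting the fact that such a center would itself have to be dominated by a contracted rational curve. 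Assembling these cases gives $g(D)=0$ for every $h$-contracted divisor $D$, hence $g(h)=0$ by Proposition \ref{pro:samedef}.
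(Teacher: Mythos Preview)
Your reduction is sound up to a point: using Proposition~\ref{pro:samedef} you correctly reduce to showing that every irreducible divisor $D\subset Z$ contracted by $h$ has genus~$0$; and since both $\sigma=\tau'\circ h$ and $\tau=\sigma'\circ h$ factor through $h$ with $\tau',\sigma'$ morphisms, any such $D$ is exceptional for both $\sigma$ and $\tau$, hence equals some $F_i$ in the blow-up tower of the regular resolution $\sigma$.

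The gap is in your last paragraph. The argument of Lemma~\ref{lem:essinEi} that you invoke only applies to \emph{essential} $F_i$: it needs that for general $p\in B_i$ the preimage $\tilde\sigma^{-1}(p)$ equals the fibre $l_p$ of $F_i\to B_i$, so that Lemma~\ref{lem:basic} forces $\tau(l_p)$ to be a curve, and then a $\tau$-contracted rational curve through a general point of $F_i$ must dominate $B_i$, contradicting $g(B_i)\ge 1$. For a \emph{non}-essential $F_i$ one has $\tilde\sigma^{-1}(p)\supsetneq l_p$, and knowing $\dim\tau(\tilde\sigma^{-1}(p))\ge 1$ tells you nothing about $\tau(l_p)$; the $\tau$-contracted rational curve through a general point of $F_i$ can then perfectly well be the fibre $l_p$ itself, and no contradiction arises. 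Your phrases ``transposed to this setting'' and ``chasing this shows'' do not supply the missing step, and Lemma~\ref{lem:ess} does not help either: it says the maximum of $g(F_i)$ is attained on an essential divisor, but you need to rule out positive genus for \emph{every} $h$-contracted $F_i$, not just the essential ones.

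The paper circumvents exactly this difficulty by arguing inductively along the tower $Z_0=X,\dots,Z_r=Z$. Assuming $g(h_i)=0$, it takes a regular resolution $q_i\colon W_i\to Z'$ of $h_i^{-1}$ (so $p_i\colon W_i\to Z_i$ is a morphism and, by the inductive hypothesis together with Proposition~\ref{pro:samedef}, $g(q_i)=0$). Regularity of $\sigma$ forces $B_{i+1}$ to lie in the base locus of $Z_i\dashrightarrow Y$, hence of $p_i^{-1}$, so $p_i^{-1}(B_{i+1})$ contains a divisor; the universal property of the blow-up then shows the new exceptional divisor $E_{i+1}$ is the strict transform of a divisor on $W_i$. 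Thus $p_i^{-1}\circ\sigma_{i+1}\colon Z_{i+1}\dashrightarrow W_i$ contracts nothing, and composing with $q_i$ gives $g(h_{i+1})=0$. This step-by-step bookkeeping is what handles all the $F_i$ at once, essential or not, and is the idea your sketch is missing.
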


\begin{proof}
We write $\sigma = \sigma_1 \circ \dots \circ \sigma_r$, where $\sigma_i\colon Z_i \to Z_{i-1}$ is the blow-up of a smooth center $B_i$. 
Note that $Z_0 = X$, and $Z_r = Z$.
Assume that the induced map $h_i\colon Z_i \dashrightarrow Z'$ has genus 0 (this is clearly the case for $h_0 = {\tau'}^{-1}$), and let us prove the same for $h_{i+1} = h_i \circ \sigma_{i+1}$.
We can assume that $\sigma_{i+1}$ is the blow-up of a non-rational smooth curve $B_{i+1}$, otherwise there is nothing to prove.
Consider 
$$\xymatrix{
& W_i \ar[ld]_{p_i} \ar[dr]^{q_i}\\
Z_i \ar@{-->}[rr]_{h_i} && Z'
}$$
a regular resolution of $h_i^{-1}$.
Since $W_i$ dominates $Y$ via the morphism $\sigma' \circ q_i$, the curve $B_{i+1}$ is in the base locus of $p_i^{-1}$: otherwise $B_{i+1}$ would not be in the base locus of $Z_i \dashrightarrow Y$, contradicting the regularity of the resolution $\sigma$.
Thus for any point $x \in B_{i+1}$, $p_i^{-1}(x)$ is a curve, and there exists an open set $U \subset Z_i$ such that $p_i^{-1} (U \cap B_{i+1})$ is a non-empty divisor. 
By applying over $U$ the universal property of blow-up  (see \cite[Proposition II.7.14]{Har}), we get that there exists an irreducible divisor on $W_i$ whose strict transform on $Z_{i+1}$ is the exceptional divisor $E_{i+1}$ of $\sigma_{i+1}$.
Hence the birational map $p_i^{-1}\circ\sigma_{i+1}\colon Z_{i+1} \dashrightarrow W_i$ does not contract any divisor and so has genus 0.
Composing by $q_i$ which also has genus 0 by hypothesis we obtain $g(h_{i+1}) = 0$.
By induction we obtain $g(h_r) = 0$, hence the result since $h_r = h$.
\end{proof}

\begin{rem}
In the setting of Proposition \ref{2.2}, even if $\sigma'$ is also a regular resolution, there is no reason for $h\colon Z \dashrightarrow Z'$ to be a pseudo-isomorphism.
For instance, if $\phi$ admits a curve $C$ in its base locus, one could construct regular resolutions of $\phi$ starting blowing-up arbitrary many points on $C$, and so the Picard number of $Z$ can be arbitrary large (thanks to the referee who pointed this fact out).  

On the other hand, it is not clear if we could restrict the definition of a regular resolution (for instance allowing the blow-up of a point only if it is a singular point of the base locus), such that the regular resolutions $\sigma$ and $\sigma'$ would lead to $3$-folds $Z$ and $Z'$ with the same Picard numbers, and with $h$ a pseudo-isomorphism.
In such a case, Proposition \ref{2.2} would follow from Corollary \ref{cor:C3&psauto}.
\end{rem}
 
The next result is less elementary.

\begin{pro} \label{pro:inverse}
Let $X$ be a 3-fold with Hodge numbers $h^{0,1} = h^{0,3} = 0$,  and let $\phi\colon X \dashrightarrow X$ be a birational self-map.
Then $g(\phi) = g(\phi^{-1})$.
\end{pro}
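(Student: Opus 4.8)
The plan is to compute the maximal genus appearing in a resolution cohomologically, and to check that this quantity is intrinsically symmetric under $\phi \leftrightarrow \phi^{-1}$. By Proposition \ref{pro:samedef} we may work with the Cantat definition: $g(\phi)$ is the maximum genus of an irreducible divisor $E \subset X$ contracted by $\phi$, and $g(\phi^{-1})$ is the maximum genus of an irreducible divisor contracted by $\phi^{-1}$, which lives in $X$ as well (since the target is again $X$). Fix a regular resolution
$$\xymatrix{
& Z \ar[ld]_{\sigma} \ar[dr]^{\tau} \\
X \ar@{-->}[rr]_{\phi} && X
}$$
so that the divisors contracted by $\phi$ have strict transforms among the $\sigma$-exceptional divisors, and the divisors contracted by $\phi^{-1}$ have strict transforms among the $\tau$-exceptional divisors. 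Recall that by Lemma \ref{lem:E} a contracted divisor $E$ is birational to $\cpo \times C$, and $g(E) = g(C)$; for a ruled surface over a curve $C$, the curve $C$ is recovered as the image of the Albanese map, so $g(E) = h^{1,0}$ of a smooth model of $E$.

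First I would express the genus in terms of $H^3$ (equivalently $H^{1}$, by duality) of $Z$. The idea: blowing up a smooth curve $B$ of genus $g$ in a $3$-fold adds a summand isomorphic to $H^1(B)$ (a Hodge structure of weight $1$, rank $2g$) to the degree-$3$ cohomology (more precisely, $H^3(Z_i) \cong H^3(Z_{i-1}) \oplus H^1(B_i)$, and blowing up a point changes nothing in $H^3$). Hence, starting from $H^3(X)$ and running the resolution $\sigma$,
$$H^3(Z) \cong H^3(X) \oplus \bigoplus_{i} H^1(B_i),$$
and since $h^{0,1}(X) = h^{0,3}(X) = 0$ forces $H^3(X)$ to be of Hodge type purely $(1,2)+(2,1)$ — in fact the hypothesis is designed exactly so that $H^3(X)$ carries no ``extra'' pieces that could masquerade as curve contributions — the largest $g$ among the centers $B_i$ equals the largest genus of a simple factor of the weight-$1$ part of $H^3(Z)$ complementary to $H^3(X)$. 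The same computation run through $\tau$ gives the corresponding statement for $\phi^{-1}$. Since $Z$, $H^3(Z)$ and $H^3(X)$ are the same on both sides, the two maxima coincide, giving $g(\phi) = g(\phi^{-1})$.

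The main obstacle is making the ``largest genus = largest simple weight-$1$ summand'' step precise: a priori several centers contribute, and a Hodge structure of weight $1$ does not canonically decompose, so one cannot literally read off individual genera from $H^3(Z)/H^3(X)$. The clean way around this is to not decompose at all but to argue via the images of contracted divisors: by Lemma \ref{lem:essinEi} the maximal genus is realized by an essential $F_i$ which is itself a contracted divisor $E_j \subset X$, i.e. $g(\phi) = \max_j g(E_j)$ where $E_j$ is a genuine surface in $X$ whose normalization surjects onto a curve $C_j$ with $H^1(C_j)$ a sub-Hodge-structure of $H^3(X)$... which vanishes — so one must instead realize $H^1(C_j)$ inside $H^3$ of the \emph{graph} $Z$ and track it through $\tau$. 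Concretely: let $E \subset X$ be contracted by $\phi$ with $g(E) = g(\phi)$ maximal; its strict transform $\tilde E \subset Z$ is not $\tau$-exceptional (else $\phi^{-1}$ would contract a divisor onto $E$, but one checks using Lemma \ref{lem:E} that the ruling directions are incompatible unless $g = 0$), hence $\tau(\tilde E)$ is a divisor $E' \subset X$ birational to $\tilde E$, so $g(E') = g(E)$; but $E'$, being dominated by a $\sigma$-exceptional divisor that survives on $Z$ and maps onto it, is a divisor contracted by $\phi^{-1}$ — wait, that is false in general, so the honest argument really does need the Albanese/$H^{1,0}$ bookkeeping of the previous paragraph, and the role of the hypothesis $h^{0,1}(X) = h^{0,3}(X) = 0$ is precisely to kill the ambiguity by ensuring every weight-$1$ class in $H^3(Z)$ comes from a blow-up center on \emph{both} sides. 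I expect the write-up to follow Frumkin here, extracting from $H^3(Z)$ the multiset of genera $\{g(B_i)\}$ via the Hodge-theoretic blow-up formula applied to both $\sigma$ and $\tau$, and concluding that the two multisets have the same maximum because they compute the same sub-Hodge-structure $H^3(Z)/H^3(X)$.
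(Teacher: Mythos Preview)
The paper does not give a self-contained proof of this proposition; it simply refers to Frumkin's argument via intermediate Jacobians and, alternatively, to \cite{LS}. Your plan is exactly the Hodge-theoretic skeleton of Frumkin's approach, so in outline you are aligned with what the paper cites.

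The gap is the one you yourself flag and then do not close. From the blow-up formula you get, on each side, $H^3(Z)\cong H^3(X)\oplus\bigoplus_i H^1(B_i)$, and you want to read off $\max_i g(B_i)$; but, as you say, a weight-one Hodge structure has no canonical splitting, so as written nothing prevents the two direct-sum presentations from having different summands with the same total. Your proposal ends with ``I expect the write-up to follow Frumkin here'', which is a deferral, not an argument. The missing ingredient is the \emph{principal polarization}. The hypotheses on the Hodge numbers make the intermediate Jacobian $\mathcal{J}(Z)$ a principally polarized abelian variety (this is precisely the Clemens--Griffiths input \cite[3.23]{CG} the paper points to), and the blow-up formula upgrades to an isomorphism of PPAVs $\mathcal{J}(Z)\cong\mathcal{J}(X)\times\prod_i J(B_i)$. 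Now a PPAV decomposes \emph{uniquely} into indecomposable factors, and the Jacobian of a smooth irreducible curve is indecomposable because its theta divisor is irreducible; hence the multiset $\{J(B_i):g(B_i)\ge 1\}$, and in particular its maximal dimension, is intrinsic to the pair $(\mathcal{J}(Z),\mathcal{J}(X))$ and therefore the same whether computed from the $\sigma$-side or the $\tau$-side. A minor secondary issue: in your setup $\sigma$ is a regular resolution but $\tau$ is only a birational morphism, so before invoking the blow-up formula on the $\tau$-side you should pass to a $Z$ dominating regular resolutions of both $\phi$ and $\phi^{-1}$. Finally, your abandoned attempt to track a top-genus contracted divisor directly through $\tau$ is in the spirit of the \cite{LS} approach the paper also cites, but as you noticed it does not go through on its own.
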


For the proof, which relies on the use of intermediate Jacobians,
I refer to the original paper of Frumkin \cite[Proposition 2.6]{F}, or to \cite{LS} where it is proved that the exceptional loci of $\phi$ and $\phi^{-1}$ are birational (and even more piecewise isomorphic).
Note that Frumkin does not mention any restriction on the Hodge numbers of $X$, but it seems implicit since the proof  uses the fact, through the reference to \cite[3.23]{CG}, that the complex torus $\mathcal{J}(X)$ is a principally polarized abelian variety.

\begin{cor}
Let $g \ge 0$ be an integer. 
The set of birational self-maps of $\cpt$ of genus at most $g$ is a subgroup of $\Bir(\cpt)$.
\end{cor}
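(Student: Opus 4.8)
The plan is to check the three conditions defining a subgroup of $\Bir(\cpt)$ for the set $G_g:=\{\phi\in\Bir(\cpt):g(\phi)\le g\}$. That $\id\in G_g$ is immediate: the identity contracts no divisor --- equivalently it is a pseudo-isomorphism, cf.\ Corollary~\ref{cor:C3&psauto} --- so $g(\id)=0\le g$. For stability under inverses I would apply Proposition~\ref{pro:inverse} with $X=\cpt$: since $h^{0,1}(\cpt)=h^1(\Ol_{\cpt})=0$ and $h^{0,3}(\cpt)=h^3(\Ol_{\cpt})=0$, that proposition applies to every $\phi\in\Bir(\cpt)$ and yields $g(\phi^{-1})=g(\phi)$; hence $\phi\in G_g$ implies $\phi^{-1}\in G_g$. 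The substance of the statement is therefore stability under composition, which will follow from the inequality
$$g(\phi\circ\psi)\ \le\ \max\{g(\phi),g(\psi)\}\qquad\text{for all }\phi,\psi\in\Bir(\cpt).$$

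To establish this I would use Cantat's description of the genus --- legitimate by Proposition~\ref{pro:samedef} --- as the maximum of $g(E)$ over the irreducible divisors $E$ contracted by the map. The key point is that every irreducible divisor $E\subset\cpt$ contracted by $\phi\circ\psi$ is birational to a divisor contracted by $\psi$ or to a divisor contracted by $\phi$. Indeed, if $\psi$ contracts $E$ we are in the first case. Otherwise $E':=\overline{\psi(E)}$ is again an irreducible divisor, $\psi$ restricts to a birational equivalence $E\dashrightarrow E'$ (checked on a common resolution of $\psi$, where the strict transforms of $E$ and of $E'$ coincide and project birationally onto $E$, resp.\ $E'$), and $\phi$ contracts $E'$ since $\phi\circ\psi$ contracts $E$ while $\psi$ maps $E$ dominantly onto $E'$. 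Now the genus of a contracted divisor is a birational invariant: for $E$ birational to $\cpo\times C$ it equals $g(C)$, which is the irregularity $h^1(\Ol)$ of any smooth projective model of $E$. Hence in the first case $g(E)\le g(\psi)$ and in the second $g(E)=g(E')\le g(\phi)$; taking the maximum over all such $E$ (and recalling that the genus is $0$ when there is no contracted divisor) yields the displayed inequality.

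Granting the inequality, closure under composition is immediate: if $g(\phi),g(\psi)\le g$ then $g(\phi\circ\psi)\le g$. Together with $\id\in G_g$ and stability under inverses, this shows that $G_g$ is a subgroup of $\Bir(\cpt)$. I expect the only genuine difficulty to lie in the composition inequality, and within it in the case where $\psi$ does not contract $E$: there one must recognize that $\psi$ induces a birational identification of $E$ with its image divisor, and use that the genus attached to a contracted divisor depends only on its birational class. With the contracted-divisor definition available this is short; attempted directly through Frumkin's definition via blow-up centers it would be considerably more painful, as one would first need to manufacture a regular resolution of $\phi\circ\psi$ out of regular resolutions of $\phi$ and $\psi$.
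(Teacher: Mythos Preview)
Your proof is correct and follows the same approach as the paper's: stability under inverse via Proposition~\ref{pro:inverse}, and stability under composition via Cantat's contracted-divisor description of the genus. Your treatment of composition is in fact more careful than the paper's one-line sketch (``any divisor contracted by $\phi\circ\phi'$ is contracted either by $\phi$ or by $\phi'$''), since you make explicit that when $\psi$ does not contract $E$ one must pass to the image divisor $E'=\overline{\psi(E)}$ and invoke birational invariance of $g(E)$.
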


\begin{proof}
Stability under taking inverse is Proposition \ref{pro:inverse}, and stability under composition comes from the fact that any divisor contracted by $\phi \circ \phi'$ is contracted either by $\phi$ or by $\phi'$.
\end{proof}

\begin{ques}
The last corollary could be stated for any 3-fold satisfying the assumptions of Proposition \ref{pro:inverse}, but I am not aware of any relevant example. For instance, if $X \subset \cpq$ is a smooth cubic 3-fold, is there any birational self-map of $X$ with genus $g \ge 1$?
\end{ques}

\bibliographystyle{myalpha.bst}
\bibliography{biblio}

\end{document}